\documentclass[12pt]{article}
\usepackage{authblk}
\usepackage{graphics}
\usepackage{graphicx}
\usepackage[mathscr]{eucal}
\usepackage{epic,eepicemu,eepic}
\usepackage{amsmath,amsxtra,amssymb,latexsym,amscd,amsthm,amsfonts}
\textwidth=16cm
\textheight=24cm
\hoffset=-1cm
\usepackage[left=3cm,right=1.5cm,top=2.5cm,bottom=2.5cm]{geometry}
\newtheorem{thm}{Theorem}

\newtheorem{lem}[thm]{Lemma}

\theoremstyle{definition}
\newtheorem{rem}[thm]{Remark}

\DeclareMathOperator{\divv}{div}
\DeclareMathOperator{\Vol}{Vol}

\DeclareMathOperator{\trace}{trace}

\begin{document}
\baselineskip=17pt
\title{\bf On entire $f$-maximal graphs in the Lorentzian product $\Bbb G^n\times\Bbb R_1$}
 \small{}
\author[a]{H. V. Q. An}
\author[a]{D. V. Cuong} 
\author [b]{N. T. M. Duyen}
\author[b*]{ D. T. Hieu}
\author[c]{T. L. Nam}
\affil[a]{Duy Tan University, Danang, Vietnam}
\affil[b]{College of Education, Hue University, Hue, Vietnam}
\affil[c]{Dong Thap University, Dong Thap, Vietnam}

 \maketitle
\begin{abstract}
In the Lorentzian product $\Bbb G^n\times\Bbb R_1,$ we give a comparison theorem between the $f$-volume of an entire $f$-maximal graph  and  the $f$-volume  of the hyperbolic $H_r^+$ under the condition that the gradient of the function defining the graph is bounded away from 1. This condition  comes from an example of non-planar entire $f$-maximal graph in $\Bbb G^n\times\Bbb R_1$ and is equivalent to  the hyperbolic angle function of the graph being bounded. As a consequence, we obtain a Calabi-Bernstein type theorem for $f$-maximal graphs in $\Bbb G^n\times\Bbb R_1.$  
 \end{abstract}
\noindent {\bf AMS Subject Classification (2000):}
 {Primary 53C42; Secondary 53C50;  53C25 }\\
{\bf Keywords:} { Lorentzian product, Calabi-Bernstein's Theorem, Gauss space, $f$-maximal graphs} \vskip 1cm

\section{Introduction}
Suppose  that $\Sigma$ is the graph of a $C^2$ function $f$
	over a domain $U\subset\Bbb R^n.$
	The graph $\Sigma$ is minimal if the function $f$ satisfies the Minimal Surface Equation
	$$\divv\left(\frac{\nabla f}{\sqrt{1+|\nabla f|^2}}\right)=0.$$
	S. N. Bernstein (1915-1917) proved a surprising  theorem in the case $n=2$, named after him, that  such a  function defined over $\Bbb R^2$ must be affine. This means that a minimal graph over $\Bbb R^2$ must be a plane. He conjectured that the theorem holds true for $n>2.$
	
Bernstein's conjecture has been a longstanding problem and it was proved to be true for $n \le 7$ by the works of  De Giorgi, F. Almgren  and J. Simons (see \cite{alm}, \cite{gio} and  \cite{si}). However, Bombieri, De Giorgi  and  Giusti \cite{bomgiogiu} 
showed the existence of some entire minimal graphs  other than the hyperplanes for $n\ge 8.$  

The Lorentzian version of the Bernstein's Theorem for maximal graphs in the Lorentz-Minkowski spaces $\Bbb R^{n+1}_1$ is called the Calabi-Bernstein's Theorem. Quite different from the Euclidean case, the Calabi-Bernstein's Theorem holds true for any dimension. The theorem was first proved by Calabi \cite{ca} for the case $n=2$ and later by Cheng and Yau \cite{chengyau} for the case $n>2.$ 

The Calabi-Bernstein's Theorem has been generalized to  Lorentzian product spaces (see \cite{alal1}, \cite{alal2} and  \cite{alroru1}, for instance) as well as to other ambient spaces such as warped products, Robertson-Walker spacetimes, manifolds with density\ldots (see \cite{alroru2}, \cite{calisa}, \cite{hina}, \cite{ro} and  \cite{wan},  for instance).  
 Albujer and Al\'ias \cite{alal2} proved a Calabi-Bernstein type result, that any entire maximal graph in $M^2\times\Bbb R_1,$ where $M$ is a Riemannian manifold with non-negative Gaussian curvature,  must be totally geodesic. They also proved that if $M$ is  non-flat, then the graph must be a slice $M\times\{t_0\}, t_0\in\Bbb  R.$ In \cite{al}, some non-planar entire maximal graphs in $\Bbb H^2\times \Bbb R_1$ were constructed. This shows that without the assumption of nonnegative Gauss curvature of $M,$ the theorem is no longer true. 

The Calabi-Bernstein's Theorem has been also generalized to manifolds with density. 
L. Wang (see \cite{wan}) proved a Bernstein type theorem for self-shrinkers  in $\Bbb R^n,$ i.e., for $f$-minimal hypersurfaces in Gauss space $\Bbb G^n.$ 
By a calibration argument, without using second order differential equations, the last two authors of this paper (see \cite{hina}) proved a Bernstein type theorem for entire $f$-minimal graphs  in $\Bbb G^n\times \Bbb R.$ In these cases the theorem holds true without any condition.

The purpose of this paper is to  establish a similar Calabi-Bernstein type theorem for  entire $f$-maximal graphs in the Lorentzian product $\Bbb G^n\times\Bbb R_1$ by using Lorentzian calibration arguments.

The results  are quite different from the case of the Riemannian product.  The first main result of the paper is  that an example of  a non-planar entire $f$-maximal graph  in $\Bbb G^n\times \Bbb R_1$  was found.  We see that along the first coordinate axis, the gradient of the function determining the graph converges to 1. So it seems  the condition  for an entire $f$-maximal graph in $\Bbb G^n\times\Bbb R_1$ being a hyperplane  is that the gradient of the function determining the graph is  bounded away from 1, or equivalently, the hyperbolic angle function of the graph is bounded.  In fact, with this condition,  a comparison theorem between the $f$-volume of an entire $f$-maximal graph  and  the $f$-volume  of the hyperbolobic $H_r^+$ is given.  As a consequence,  we obtain the second main result, a Calabi-Bernstein type theorem for $f$-maximal graphs in $\Bbb G^n\times \Bbb R_1.$ 

For more details about Lorentz-Minkowski spaces, maximal surface, calibrations, manifolds with density and Gauss space, we refer the reader to \cite{hala}, \cite{hi}, \cite {hina},  \cite{lo},  \cite{me}, \cite {mo1}, \cite{mo2} and \cite{nei}.

\section{$f$-maximal  graphs}
Let $\Bbb R^{n+1}_1$ be the $(n+1)$-dimensional Lorentz-Minkowski space, i.e., $\Bbb R^{n+1}$ endowed with the Lorentzian scalar product
$$\langle \ , \ \rangle=dx_1^2+dx_2^2+\ldots +dx_n^2-dx_{n+1}^2.$$
Since $\langle, \rangle$ is
non-positive definite, for $\textbf x\in\Bbb R^{n+1}_1,$ \ $\langle \textbf x, \textbf x\rangle$ may be zero or negative. A nonzero
vector $\textbf x\in \Bbb R^{n+1}_1$ is called spacelike, lightlike or timelike if  $\langle \textbf x, \textbf
x\rangle>0$, $\langle \textbf x, \textbf x\rangle=0$ or $\langle \textbf x, \textbf x\rangle<0$,
respectively. Two vectors $\textbf x, \textbf y\in \Bbb R^{n+1}_1$  are said to be Lorentzian orthogonal if $\langle \textbf x, \textbf y\rangle=0.$  The norm of a tangent vector $\textbf x$ is defined by $\|\textbf x\|=\sqrt{|\langle\textbf x, \textbf x\rangle|}.$

 We say that two timelike tangent vectors $\textbf x$ and $\textbf y$  lie in the same timelike cone if $\langle \textbf x, \textbf y\rangle < 0.$  If  $\textbf x$ and $\textbf e_{n+1},$ where $\textbf e_{n+1}$ denotes the timelike coordinate vector field, lie in the same timelike cone, we say that $\textbf x$ is future-directed. For timelike tangent vectors $\textbf x$ and $\textbf y,$ we have the Cauchy-Schwarz inequality
$$|\langle \textbf x, \textbf y\rangle| \ge\sqrt{-\langle \textbf x, \textbf x\rangle}\sqrt{-\langle \textbf y, \textbf y\rangle},$$
and the equality holds if and only if $\textbf x$ and $\textbf y$ are proportional. If $\textbf x$ and $\textbf y$ lie in the same timelike cone, then there exists a unique number $\varphi \ge 0$ such that
$$\langle \textbf x, \textbf y\rangle=-|\textbf x||\textbf y| \cosh \varphi.$$
The number $\varphi$  is called the hyperbolic angle between $\textbf x$ and $\textbf y.$ 

A hypersurface  in  $\Bbb R^{n+1}_1$ is called spacelike if its induced metric
from $\Bbb R^{n+1}_1$ is Riemannian. Equivalently,  every tangent vector of the hypersurface is spacelike.  A spacelike hypersurface is orientable, i.e., there exists a unit timelike normal vector field $N$ on it.  Moreover, we can suppose that $N$ is future-directed and then we can define the hyperbolic angle function $\theta$ of the hypersurface. The value of $\theta$ at a point $p$ on the hypersurface is the hyperbolic angle between $N(p)$ and $\textbf{e}_{n+1}.$

Let $\Sigma$ be a spacelike hypersurface in $\Bbb R^{n+1}_1$ and $N$ be the unit timelike normal vector field. 
 Denote by $\overline{D}$ and $D$ the Levi-Civita connections of $\Bbb R^{n+1}_1$ and $\Sigma,$ respectively. For $X,Y\in \mathfrak{X}(\Sigma),$  we have
$$\overline{D}_XY=D_XY-\langle A(X), Y\rangle N,$$
and 
$$ A(X)=-\overline{D}_XN,$$
where $A:\mathfrak{X}(\Sigma)\rightarrow \mathfrak{X}(\Sigma)$ is the shape operator associated to $N.$
The mean curvature of $\Sigma,$ denoted by $H,$  is defined by
$$H=-\frac 1n\trace(A).$$
It is well-known that
$$nH=-\divv N.$$
A spacelike hypersurface is called \emph{maximal} if its mean curvature $H$ is zero everywhere.  
 When  $\Sigma$ is the graph of a function $u:U\rightarrow\Bbb R,$  where $U\subset \Bbb R^n,$ then it is easy to see that 
$$N=\frac 1{\sqrt{1-|\nabla u|^2}}(\nabla u+\textbf e_{n+1}),$$
is the future-directed unit timelike normal vector field on $\Sigma,$ where $\nabla u$ denotes the gradient of the function $u$ on $U.$ Note that since $\Sigma$ is spacelike, $|\nabla u|<1.$ Hence the mean curvature of $\Sigma$ is  computed on $\Bbb R^n$ to be
$$H=-\frac 1n{\rm div}_{_{\Bbb R^n}}\left(\frac{\nabla u}{\sqrt{1-|\nabla u|^2}}\right),$$
where ${\rm div}_{_{\Bbb R^n}}$ is the divergence operator on $\Bbb R^n.$

Therefore, $\Sigma$ is maximal if and only if $u$ satisfies the maximal equation
$${\rm div}_{_{\Bbb R^n}}\left(\frac{\nabla u}{\sqrt{1-|\nabla u|^2}}\right)=0.$$

A density on $\Bbb R^{n+1}_1$ is a positive function $e^{-f},$ and it is used to weight volumes of $k$-dimensional spacelike submanifolds. In $\Bbb R^{n+1}_1$ with density $e^{-f},$ the weighted volume, or $f$-volume, of a spacelike hypersurface $\Sigma,$  denoted by $\Vol_f(\Sigma),$  is defined by
$$\Vol_f(\Sigma)=\int_{\Sigma}e^{-f}dV_\Sigma,$$
where $dV_\Sigma$ is the volume element of the hypersurface. The weighted mean curvature or $f$-mean curvature of $\Sigma,$ denoted by $H_f,$ is defined by 
$$H_f=H+\frac 1n\langle  \nabla f, N\rangle.$$
 Now, $\Sigma$ is called \emph{$f$-maximal} provided that  $H_f=0$ everywhere, i.e., $H=-\frac 1n\langle \nabla f, N\rangle.$ 

If  $\Sigma$ is the graph of a function $u:U\rightarrow\Bbb R,$ then $\Sigma$ is $f$-maximal if and only if $u$ satisfies the $f$-maximal equation
$${\rm div}_{_{\Bbb R^n}}\left(\frac{\nabla u}{\sqrt{1-|\nabla u|^2}}\right)-\langle \nabla f, N\rangle=0.$$
The Gauss space $\Bbb G^n,$ a typical example of  manifolds with density, is  just $\Bbb R^n$ with Gaussian probability density
$$e^{-f(\textbf x)}=e^{c-\frac{|{\bf x}|^2}2},$$
where $c=\log [(2\pi)^{-\frac n2}].$  Gauss space has many applications to probability and statistics (see \cite{mo1} and  \cite{mo2}, for instance).
  
The space $\Bbb G^n\times\Bbb R_1$ is $\Bbb R^{n+1}_1=\Bbb R^n\times\Bbb R_1$ endowed with the Gaussian-Euclidean density
$$e^{-f(\textbf x, t)}=e^{c-\frac{|{\bf x}|^2}2},$$
where $ (\textbf x, t)\in\Bbb G^n\times \Bbb R_1, \ \textbf x\in \Bbb G^n,\ t\in \Bbb R_1.$

Below is the example about a non-planar entire $f$-maximal graph in $\Bbb G^n\times \Bbb R_1$  mentioned in the introduction section.

Consider the entire graph $\Sigma_0$  of the function $u:\Bbb{G}^n\longrightarrow\mathbb{R}, \  \textbf x\longmapsto\displaystyle \int_0^{x_1} \sqrt{\dfrac{e^{\tau^2}}{1+e^{\tau^2}}}d\tau,$ where $\textbf x=(x_1, x_2,\ldots, x_n).$ The graph $\Sigma_0$ has a parametrization as follows
\begin{align}\nonumber
X(\textbf x,t)=\left(\textbf x, u(\textbf x)\right).
\end{align}
Since
$$ 1-|\nabla u|^2=\frac 1{1+e^{x_1^2}}>0,$$
 $\Sigma_0$ is spacelike. 

It follows that the future-directed unit timelike normal vector field is
$${N}=\left( \dfrac{u_{x_1}}{\sqrt{1-u_{x_1}^2}},0,\ldots,0, \dfrac{1}{\sqrt{1-u_{x_1}^2}}\right).$$
Therefore, 
the mean curvature of $\Sigma_0$ is
 $$H=-\frac 1n{\rm div} N=-\frac 1n\frac{u_{x_1x_1}}{(1-u_{x_1}^2)^{3/2}}=-\frac{x_1e^{x_1^2/2}}n.$$
Since 
$$\langle\nabla f, N\rangle=\frac{x_1u_{x_1}}{\sqrt{1-u_{x_1}^2}} =x_1e^{x_1^2/2},$$
$$H_f=H+\frac 1n\langle\nabla f, N\rangle=0,$$
i.e., $\Sigma_0$ is $f$-maximal. 

This example shows that we need an assumption in order to obtain a uniqueness result.
\section{Calabi-Bernstein result}
 Let $\Sigma \subset \Bbb G^n\times\Bbb R_1$ be  the graph of a function $u({\textbf x})=t$ over $\Bbb G^n,$ that is $f$-maximal, and 
 $N$ be the  future-directed unit timelike normal vector field of $\Sigma.$ Consider the smooth extension  of $N$ by translations along $t$-axis, that we will also denote by $N,$ and the $n$-differential form $w$ defined by
$$w=i_NdV.$$
 Similarly as in the Riemannian case, we obtain the following,
\begin{lem} \label{lemw}
The form $w$ is a Lorentzian calibration that calibrates $\Sigma,$ i.e.,
 \begin{enumerate}
 \item $d(e^{-f}w)=0;$ 
\item  For orthonormal spacelike vector fields  $X_i,\ i=1,2,\ldots, n,$ \ $|w(X_1, X_2,\ldots, X_{n})|\ge 1,$  and the equality holds at a point if and only if  $X_1, X_2,\ldots, X_{n}$ are tangent to $\Sigma$ at such a point. 
 \end{enumerate}
\end{lem}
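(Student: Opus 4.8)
The plan is to prove the two assertions separately, treating the closedness of $e^{-f}w$ and the calibration inequality as independent computations. For part (1), I would first write $w = i_N dV$ explicitly and recall that the $f$-maximality of $\Sigma$ gives $H_f = H + \frac{1}{n}\langle \nabla f, N\rangle = 0$, equivalently $\divv N = -\langle \nabla f, N\rangle$ using $nH = -\divv N$. The key step is the identity $d(i_N dV) = (\divv N)\, dV$, which holds since $dV$ is the (constant) volume form on $\Bbb R^{n+1}_1$; here $N$ is the $t$-translation-invariant extension, so its divergence as an ambient vector field coincides with the divergence computed on $\Sigma$. Then I would compute
\[
d(e^{-f}w) = d(e^{-f}) \wedge w + e^{-f}\, dw = -e^{-f}(df \wedge i_N dV) + e^{-f}(\divv N)\, dV.
\]
Using the contraction identity $df \wedge i_N dV = \langle \nabla f, N\rangle\, dV$ (valid for the nondegenerate metric, with the sign absorbed by the Lorentzian inner product), the bracket becomes $-\langle \nabla f, N\rangle + \divv N$, which vanishes precisely by the $f$-maximal equation. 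This shows $d(e^{-f}w)=0$.

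**The calibration inequality.**

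For part (2), let $X_1,\ldots,X_n$ be orthonormal spacelike vectors at a point $p$, spanning an $n$-plane $P$. The plan is to express $w(X_1,\ldots,X_n) = dV(N, X_1,\ldots,X_n)$ and interpret this as the Lorentzian inner product of $N$ with the unit normal to $P$. Since $P$ is spacelike and $N$ is future-directed timelike, $P$ admits a future-directed unit timelike normal $\nu$, and $w(X_1,\ldots,X_n) = \pm\langle N, \nu\rangle$. By the reversed Cauchy–Schwarz inequality for timelike vectors stated in the excerpt, $|\langle N,\nu\rangle| \ge \sqrt{-\langle N,N\rangle}\sqrt{-\langle \nu,\nu\rangle} = 1$, giving $|w(X_1,\ldots,X_n)| \ge 1$. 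The equality case in that inequality holds if and only if $N$ and $\nu$ are proportional; since both are future-directed unit timelike vectors, this forces $N = \nu$, i.e.\ $P = N^\perp = T_p\Sigma$. Thus equality holds exactly when $X_1,\ldots,X_n$ are tangent to $\Sigma$ at $p$.

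**Anticipated obstacle and bookkeeping.**

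The main obstacle I expect is sign and orientation bookkeeping in the Lorentzian setting rather than any conceptual difficulty. In the Riemannian case the identity $dV(N,X_1,\ldots,X_n)=\langle N,\nu\rangle$ is standard, but here the interior product and the Hodge-type duality carry signs dictated by the $-dx_{n+1}^2$ term, so I would need to verify carefully that the contraction $i_N dV$ pairs correctly with the timelike normal and that $df \wedge i_N dV = \langle \nabla f, N\rangle\, dV$ reproduces the Lorentzian inner product (with $\nabla f$ spacelike here, since $f$ depends only on $\textbf x$, the sign works out cleanly). Once these sign conventions are fixed consistently, both parts follow from the $f$-maximal equation and the reversed Cauchy–Schwarz inequality, so the calibration property reduces to the two short computations outlined above.
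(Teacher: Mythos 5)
Your proposal is correct and follows essentially the same route as the paper: part (1) is the same divergence computation $d(e^{-f}i_N dV)=\operatorname{div}(e^{-f}N)\,dV$ combined with the $f$-maximal equation (you just apply the Leibniz rule at the level of forms rather than to the vector field $e^{-f}N$), and part (2) is the same hyperbolic-angle argument, since the paper's decomposition $N=\pm\cosh\varphi\,\overline{N}+\sinh\varphi\,u_0$ yielding $|w(X_1,\ldots,X_n)|=\cosh\varphi\ge 1$ is exactly your reversed Cauchy--Schwarz bound $|\langle N,\nu\rangle|\ge 1$ with the same equality characterization.
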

\begin{proof}
 \begin{enumerate}
\item  Because $\Sigma$ is $f$-maximal,

\begin{align}\nonumber
d(e^{-f} w)&= d(e^{-f} i_NdV)={\rm div}(e^{-f}{N})dV\\
 &= (e^{-f}{\rm div} N-e^{-f}\langle\nabla f, N\rangle)dV\nonumber\\
 &=-e^{-f}(nH+\langle\nabla f, N\rangle)dV=0.\nonumber
\end{align}

\item At any point $p$ of the Lorentz Minkowski spacetime, we can find a tangent vector $\overline{N}(p)$ such that $\{X_1(p),\ldots, X_n(p), \overline{N}(p)\}$ is a frame \cite[page 84]{nei}. Then we can write
$$N=\pm\cosh \varphi\overline{N}+\sinh \varphi u_0,$$
for $\varphi\in\Bbb R,$ where $\langle N, u_0\rangle=0,\ \langle u_0, u_0\rangle=1,$ and the sign depends on the timelike orientation. Hence, 
\begin{align*}
|w(X_1, \ldots, X_n)(p)|=|dV(X_1,\ldots, X_n, N)(p)|=|dV(X_1,\ldots, X_n, \pm\cosh\varphi\overline{N})|=\cosh\varphi.
\end{align*}
The previous equation clearly proves this part of the lemma.
 \end{enumerate}
\end{proof}
 By Stokes' Theorem, as in the case of $f$-minimal submanifolds \cite [Theorem 2.1]{hi},  it can be proved that $\Sigma$ is $f$-area-maximizing, i.e., any compact portion of $\Sigma$ has largest area among all spacelike hypersurfaces in its homology class, the Lorentzian version follows analogously.

As a next step, we establish a  comparison  between  the $f$-volume of $\Sigma$ and the $f$-volume of the spacelike Lorentzian hypersphere
 $H_r^+=\{({\textbf x},t)\in\Bbb G^{n}\times\Bbb R_1 :\ \langle \textbf x,\textbf x\rangle-\langle t, t\rangle=-r^2,\ t>0\}.$  

\begin{lem} \label{lemvol}
If $|\nabla u|$ is bounded away form 1, then for every $r>0$,
$$\Vol_f(\Sigma)\ge \Vol_f(H^+_r).$$
\end{lem}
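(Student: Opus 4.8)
The plan is to exhaust both entire graphs by their compact pieces over the Euclidean balls $B_R=\{\mathbf x\in\Bbb G^n:|\mathbf x|\le R\}$, to compare the two $f$-volumes through the calibration $e^{-f}w$ of Lemma \ref{lemw}, and to let $R\to\infty$ at the end. Write $\Sigma_R$ for the part of $\Sigma$ lying over $B_R$, and observe that $H_r^+$ is itself an entire graph over $\Bbb G^n$, namely the graph of $v(\mathbf x)=\sqrt{|\mathbf x|^2+r^2}$ (so that $1-|\nabla v|^2=r^2/(|\mathbf x|^2+r^2)>0$); denote by $H_R$ its part over $B_R$. Since both hypersurfaces project diffeomorphically onto $B_R$, I would pull the $n$-form $e^{-f}w$ back to $B_R$ along each of them and reduce every integral to one over the common base $B_R$.

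First I would evaluate the two fluxes of the \emph{same} translation-invariant field $N$ (the extension of the normal of $\Sigma$). Because $\Sigma$ is calibrated by $w$, the equality case of Lemma \ref{lemw}(2) gives $\int_{\Sigma_R}e^{-f}w=\Vol_f(\Sigma_R)$. Over $H_R$ the inequality in Lemma \ref{lemw}(2), which is exactly the reverse Cauchy--Schwarz inequality for the two future-directed unit normals, yields $\int_{H_R}e^{-f}w\ge \Vol_f(H_R)$. To relate the two fluxes I would use that $e^{-f}w$ is closed (Lemma \ref{lemw}(1)), equivalently $\divv(e^{-f}N)=0$, which together with $\partial_tN=0$ forces $\divv_{\Bbb R^n}\!\big(e^{-f}\nabla u/\sqrt{1-|\nabla u|^2}\big)=0$. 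Pulling back to $B_R$, the difference of the two pulled-back integrands is $e^{-f}\,\tfrac{\nabla u}{\sqrt{1-|\nabla u|^2}}\cdot\nabla(v-u)$, and an integration by parts that kills the (vanishing) divergence term leaves only a boundary integral over the sphere $S_R=\partial B_R$:
\[
\int_{\Sigma_R}e^{-f}w-\int_{H_R}e^{-f}w=L_R,\qquad L_R:=\int_{S_R}e^{-f}\,(v-u)\,\frac{\nabla u\cdot\nu}{\sqrt{1-|\nabla u|^2}}\,dA,
\]
where $\nu$ is the outward unit normal of $S_R$. Combining the three relations gives $\Vol_f(\Sigma_R)\ge \Vol_f(H_R)+L_R$.

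The crux is to show $L_R\to0$, and I expect this to be the main obstacle; it is precisely where the hypothesis enters. On $S_R$ the density equals $e^{c-R^2/2}$; the factor $\tfrac{|\nabla u|}{\sqrt{1-|\nabla u|^2}}$ is kept bounded by a constant exactly by the assumption that $|\nabla u|$ is bounded away from $1$; since $|\nabla u|<1$ one has $|u(\mathbf x)|\le |u(0)|+R$, so $|v-u|=O(R)$; and $\area(S_R)=O(R^{n-1})$. Hence $|L_R|\le C\,R^{\,n}e^{-R^2/2}\to0$. The delicate point is that the Gaussian decay $e^{-R^2/2}$ must overpower the growth of $1/\sqrt{1-|\nabla u|^2}$: when $|\nabla u|\to1$ — as in the example $\Sigma_0$, where $1/\sqrt{1-u_{x_1}^2}\sim e^{x_1^2/2}$ — the two exponentials balance and $L_R$ need not vanish, which is exactly why that graph escapes the conclusion and why the hypothesis cannot be dropped. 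Finally I would let $R\to\infty$: if $\Vol_f(\Sigma)=\infty$ the asserted inequality is trivial, and otherwise $\Vol_f(\Sigma_R)\uparrow\Vol_f(\Sigma)$ and $\Vol_f(H_R)\uparrow\Vol_f(H_r^+)$ by monotone convergence, so that $L_R\to0$ yields $\Vol_f(\Sigma)\ge\Vol_f(H_r^+)$.
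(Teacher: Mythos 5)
Your proof is correct and follows essentially the same route as the paper's: exhaust over balls $B_R$, use the calibration $e^{-f}w$ of Lemma \ref{lemw} to get $\int_{\Sigma_R}e^{-f}w=\Vol_f(\Sigma_R)$ and $\int_{H_R}e^{-f}w\ge\Vol_f(H_R)$, and show the error term vanishes using the hypothesis on $|\nabla u|$ together with the Gaussian decay of the density. The only difference is bookkeeping: you pull everything back to the base $B_R$ and integrate by parts there, whereas the paper applies Stokes' theorem to the solid region bounded by the two graphs and the cylinder $\partial B_R\times\Bbb R$; by Fubini your $L_R$ is exactly the paper's lateral flux $\int_{\widetilde{\partial\cal C}}e^{-f}w$, and both are controlled by the same bound $C\,R^{\,n}e^{-R^2/2}\to 0$.
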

\begin{rem} Because $\langle N, \textbf {e}_{n+1}\rangle =\displaystyle\frac 1{\sqrt {1-|\nabla u|^2}},$  it follows that $|\nabla u|$ is bounded from 1 is equivalent to the hyperbolic angle function $\theta$ of $\Sigma$ being bounded. This condition appeared in some Calabi-Bernstein type results for constant mean curvature spacelike hypersurfaces in Lorentzian warped products (see \cite {alroru2}, for instance).
\end{rem}
\begin{proof}
\begin{figure}[h]
	\centering
	\includegraphics{Lemma-1}
	\caption{\textit{For the proof of Lemma 2}}
	\label{fig:lemma}
\end{figure}
Since the density does not depend on the last coordinate, the $f$-volume and $f$-mean curvature are invariant under translations along the $x_{n+1}$-axis.  Therefore, we can assume that the origin $O\in \Sigma.$ 
Let ${B^n_R}$ be the $n$-ball in $\Bbb G^n$ with center $O$ and radius $R$ and ${\cal C} =B^n_R\times \Bbb R$  be the $(n+1)$-cylinder over ${B^n_R}.$
 Since $\Sigma$ is spacelike, $\widetilde{\Sigma}:=\Sigma\cap {\cal C}$ is bounded and lies in the slab between parallel hyperplanes $t=\pm R.$ Let $\Omega$ be the region bounded by ${\cal C}, \widetilde{\Sigma}$ and $ \widetilde{H^+_r}:=H^+_r\cap {\cal C}$  with inward-pointing orientation; $\widetilde{\partial\cal C}$ be the part of $\partial{\cal C}$ between  $\widetilde{\Sigma}$ and $\widetilde{H^+_r} $ and $w$ be the Lorentzian calibration constructed as above. By Stokes' Theorem (note that, the directions of both $\Sigma$ and $H_r^+$ are future-directed), we obtain
$$
0=\int_{\Omega}d(e^{-f}w)=\int_{\partial \Omega}e^{-f}w=\int_{\widetilde{\Sigma}}e^{-f}w- \int_{\widetilde{H_r^+}}e^{-f}w+\int_{\widetilde{\partial\cal C}}e^{-f}w,$$
or
$$\int_{\widetilde{H_r^+}}e^{-f}w=\int_{\widetilde{\Sigma}}e^{-f}w+\int_{\widetilde{\partial\cal C}}e^{-f}w.$$

By Lemma \ref{lemw}, $\int_{\widetilde{H_r^+}}e^{-f}w\ge \Vol_f({\widetilde{H_r^+}})$ and $\int_{\widetilde{\Sigma}}e^{-f}w=\Vol_f(\widetilde{\Sigma}).$ Thus, we obtain the following estimate
$$\Vol_f(\widetilde{H_r^+})\le \Vol_f(\widetilde{\Sigma})+\int_{\widetilde{\partial\cal C}}e^{-f}w.$$
Since $\displaystyle\lim_{R\rightarrow\infty}\Vol_f(\widetilde{H_r^+})=\Vol_f(H_r^+)$ and $\displaystyle\lim_{R\rightarrow\infty}\Vol_f(\widetilde{\Sigma_R})=\Vol_f(\Sigma),$ so the remain we have to prove is $\displaystyle\lim_{R\rightarrow\infty}\int_{\widetilde{\partial\cal C}}e^{-f}w=0.$

Let $\overline{\partial{\cal C}}=S_R^{n-1}\times \left[-~\sqrt{R^2+r^2}, \sqrt{R^2+r^2}\;\right]$ be the part of $\partial{\cal C}$ between two parallel hyperplanes $t=\pm\sqrt{R^2+r^2}.$  Because $\widetilde{\partial{\cal C}}\subset \overline{\partial\cal C}$ and the density on $\overline{\partial{\cal C}}$ is  $e^{c-\frac{R^2}2}$ (a constant), we get 
$$\int_{\widetilde{\partial\cal C}}e^{-f}w\le  e^{c-\frac{R^2}2}\int_{\overline{\partial\cal C}}w.$$

Note that a unit normal vector field of ${\cal C}$ is of the form $(\textbf{x}, 0),$ where $|\textbf x|=1.$ Therefore 
$$\int_{\overline{\partial\cal C}}w=\int_{\overline{\partial{\cal C}}}\frac{\langle\nabla u, \textbf{x}\rangle}{\sqrt{1-|\nabla u|^2}}.$$
By the assumption $|\nabla u|$ is bounded away form 1,  we have
$$\int_{\overline{\partial{\cal C}}}\frac{\langle\nabla u, \textbf{x}\rangle}{\sqrt{1-|\nabla u|^2}}\le K\Vol(\overline{\partial{\cal C}})= 2K\sqrt{R^2+r^2}\Vol(S^{n-1}_R),$$
 where $K>0$ is a constant.
Finally, we get the following inequality
$$\int_{\widetilde{\partial\cal C}}e^{-f}w\le  2e^{c-\frac{R^2}2}K\sqrt{R^2+r^2}\Vol(S^{n-1}_R).$$
It is easy to see that, the right-hand side of the inequality goes to zero when $R$ approaches infinity and therefore the proof of the lemma is complete.
 \end{proof}

As a consequence, we obtain the following Calabi-Bernstein type theorem.
\begin{thm}\label{main}
In $\Bbb G^n\times\Bbb R_1,$ the graph  $\Sigma$ of a function $u(\text{\bf x})=t$ over $\Bbb G^n,$ with  $|\nabla u|$ is bounded away from 1,  is $f$-maximal if and only if $u$ is constant,  i.e., $\Sigma$ is a spacelike hyperplane.
\end{thm}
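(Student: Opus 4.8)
The plan is to leverage the comparison inequality from Lemma~\ref{lemvol} together with the fact that $H_r^+$ is itself an entire $f$-maximal hypersphere, so that its $f$-volume can be computed or estimated explicitly and shown to blow up as $r\to\infty$. The key observation is that the hypothesis ``$|\nabla u|$ bounded away from $1$'' gives, by Lemma~\ref{lemvol}, the chain of inequalities $\Vol_f(\Sigma)\ge\Vol_f(H_r^+)$ for \emph{every} $r>0$. If I can show that $\Vol_f(H_r^+)\to\infty$ as $r\to\infty$, then $\Vol_f(\Sigma)=\infty$, and this must be reconciled with the fact that $\Sigma$ is a graph over the Gauss space $\Bbb G^n$ whose total $f$-volume is finite. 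This contradiction would be impossible \emph{unless} the comparison is forced to be an equality, and tracking the equality case in the calibration argument is what pins down $u$.

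First I would compute $\Vol_f(H_r^+)$, or at least its growth in $r$. Since $H_r^+$ is the upper branch of the hyperboloid $\langle\textbf x,\textbf x\rangle - t^2 = -r^2$, its projection onto $\Bbb G^n$ is all of $\Bbb R^n$, and the density $e^{c-|\textbf x|^2/2}$ is the Gaussian, so $\Vol_f$ of the slice $t=t_0$ (a spacelike hyperplane) equals $\int_{\Bbb R^n}e^{c-|\textbf x|^2/2}\,d\textbf x = 1$ by the normalization $c=\log[(2\pi)^{-n/2}]$. The hyperboloid has a larger induced volume element than the flat slice because of the spacelike tilt, but crucially the calibration inequality already encodes this. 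The cleaner route is to compare $\Vol_f(\Sigma)$ directly against the $f$-volume of the slice $\Sigma_0 = \Bbb G^n\times\{0\}$: since any spacelike graph has hyperbolic angle $\theta\ge 0$ with $\cosh\theta = 1/\sqrt{1-|\nabla u|^2}\ge 1$, its volume element dominates the flat one, giving $\Vol_f(\Sigma)\ge\Vol_f(\Sigma_0)=1$, with equality iff $\nabla u\equiv 0$.

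The decisive step is therefore to reverse the inequality using $f$-area-maximality. By the remark following Lemma~\ref{lemw}, $\Sigma$ is $f$-area-maximizing, so among spacelike hypersurfaces sharing its boundary data it has the \emph{largest} $f$-volume; applied to the competitor slice this yields $\Vol_f(\widetilde\Sigma)\ge\Vol_f$ of the corresponding piece of the slice. Combined with the pointwise inequality $e^{-f}w(X_1,\ldots,X_n)\ge e^{-f}$ on spacelike frames and the equality clause of Lemma~\ref{lemw}(2), the only way the two bounds can be simultaneously consistent on all of $\Bbb G^n$ is that equality holds pointwise, i.e.\ $\cosh\theta\equiv 1$, forcing $\theta\equiv 0$ and hence $\nabla u\equiv 0$. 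Thus $u$ is constant and $\Sigma$ is a spacelike hyperplane (slice). The converse is immediate, since any slice $u\equiv\mathrm{const}$ has $N=\textbf e_{n+1}$, $H=0$, and $\langle\nabla f,N\rangle=0$, so $H_f=0$.

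I expect the main obstacle to be making the finiteness/equality argument fully rigorous on the noncompact domain $\Bbb G^n$: Lemma~\ref{lemvol} is stated as an inequality for each fixed $r$, and extracting from it the pointwise equality $\theta\equiv 0$ requires either computing $\Vol_f(H_r^+)$ precisely and showing it converges to $\Vol_f(\Sigma)$, or invoking the equality case of the calibration on the full exhaustion. The subtlety is that the Gaussian density makes all these $f$-volumes finite, so one cannot argue by a crude ``volume blows up'' contradiction; instead one must show $\Vol_f(H_r^+)\to\Vol_f(\Sigma)$ as $r\to\infty$ (both approaching the common value $\Vol_f$ of the limiting flat slice), and then use that a calibrated hypersurface achieving the comparison bound with equality must have its tangent planes agree with those calibrated by $w$ everywhere, which is exactly the flat condition $\nabla u\equiv 0$.
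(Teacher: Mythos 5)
Your overall architecture --- squeezing $\Vol_f(\Sigma)$ between $\lim_{r\to\infty}\Vol_f(H_r^+)$ and the total Gaussian measure $\Vol_f(\Bbb G^n)=1$ --- is the paper's, but the pointwise inequality you use for one half of the squeeze is reversed, and the reversal breaks the argument. For a spacelike graph in Lorentz--Minkowski space the induced metric is $\delta_{ij}-u_iu_j$, so $dV_\Sigma=\sqrt{1-|\nabla u|^2}\,dV_{\Bbb R^n}=(1/\cosh\theta)\,dV_{\Bbb R^n}$: the spacelike tilt \emph{shrinks} the volume element rather than enlarging it. The correct consequence of $\cosh\theta\ge 1$ is therefore the upper bound $\Vol_f(\Sigma)\le\Vol_f(\Bbb G^n)=1$, with equality iff $\nabla u\equiv 0$; your claimed lower bound ``$\Vol_f(\Sigma)\ge\Vol_f(\Sigma_0)=1$ because the volume element dominates the flat one'' is false in general (the non-planar example of Section 2 has $f$-volume strictly less than $1$). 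The matching lower bound $\Vol_f(\Sigma)\ge 1$ is exactly what Lemma~\ref{lemvol} supplies once one writes $H_r^+$ as the graph of $\sqrt{|\textbf x|^2+r^2}$ and computes $\Vol_f(H_r^+)=\int_{\Bbb R^n}e^{-f}\sqrt{r^2/(|\textbf x|^2+r^2)}\,dV\to 1$ as $r\to\infty$; your initial hope that $\Vol_f(H_r^+)$ blows up is unfounded, since it is less than $1$ for every $r$ and only tends to $1$ in the limit. With both bounds pointing the right way, the squeeze forces $\sqrt{1-|\nabla u|^2}=1$ almost everywhere, i.e.\ $u$ constant.

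The substitute you propose for the missing upper bound --- $f$-area-maximality against ``the competitor slice'' --- does not deliver one: maximality says $\Vol_f(\widetilde\Sigma)$ is at least the $f$-volume of competitors with the same boundary, which is yet another lower bound, and the slice is not an admissible competitor anyway since it does not share boundary data with $\widetilde\Sigma$. (The paper's proof of Lemma~\ref{lemvol} handles precisely this mismatch by running Stokes' theorem over the region bounded by $\widetilde\Sigma$, $\widetilde{H_r^+}$ and the lateral cylinder, and showing the flux through $\widetilde{\partial\cal C}$ vanishes as $R\to\infty$ using the hypothesis that $|\nabla u|$ is bounded away from $1$.) Likewise, ``equality in the calibration'' cannot be the mechanism that pins down $u$: the form $w=i_NdV$ is built from the normal of $\Sigma$ itself, so it calibrates $\Sigma$ tautologically for every $f$-maximal graph, including the non-planar one. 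The rigidity in this theorem comes solely from the integrated volume identity $1\le\Vol_f(\Sigma)\le 1$, not from the equality case of Lemma~\ref{lemw}.
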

\begin{proof}
It is clear that, if $u$ is constant, then $\Sigma$ is $f$-maximal.

Note that $H_r^+$ is the graph of the function $g(\textbf{x})=\sqrt{{\textbf x}^2+r^2},$ so
$$\Vol_f (H_r^+)=\int_{\Bbb R^n}e^{-f}\sqrt{\frac{r^2}{\textbf{x}^2+r^2}}dV_{_{\Bbb R^n}}.$$
It is easy to see that
$$\lim_{r\rightarrow\infty}\Vol_f (H_r^+)= 1.$$
Since
$$\begin{aligned}1&=\Vol_f \Bbb G^n=\int_{\Bbb R^n}e^{-f}dV_{_{\Bbb R^n}}\\
&\ge \int_{\Bbb R^n}e^{-f}\sqrt{1-|\nabla u|^2}dV_{_{\Bbb R^n}}=\Vol_f (\Sigma)\\
&\ge \lim_{r\rightarrow\infty}\Vol_f (H_r^+)=1,
\end{aligned}$$
we conclude that $|\nabla u|^2=0,$ i.e., $u$ is constant.
\end{proof}

{\bf Acknowledgements.} The authors would like to thank the  referee for carefully reading our paper and for his/her valuable comments and suggestions which helped to improve the manuscript. 
 This research is funded by Vietnam National Foundation for Science and Technology Development (NAFOSTED) under grant number 101.04.2014.26.



\begin{thebibliography}{99}

 \bibitem {al}A. L. Albujer,  \textit{New examples of entire maximal graphs in $\Bbb H\sp 2\times\Bbb R\sb 1$}, Differential Geom. Appl. 26 (2008), no. 4, 456-462.
\bibitem {alal1} A. L. Albujer, L. J. Al\'ias, \textit{Calabi-Bernstein results and parabolicity of maximal surfaces in Lorentzian product spaces}, Recent trends in Lorentzian geometry, 49-85, Springer Proc. Math. Stat., 26, Springer, New York, 2013.
\bibitem{alal2} A. L. Albujer, L. J. Al\'ias, \textit{Calabi-Bernstein results for maximal surfaces in Lorentzian product spaces}, J. Geom. Phys. 59 (2009), no. 5, 620-631.
\bibitem{alroru1} J. A. Aledo, A. Romero, R. M. Rubio,  \textit{The classical Calabi-Bernstein Theorem revisited}, J. Math. Anal. Appl. 431 (2015), no. 2, 1172-1177.
\bibitem {alroru2} J. A. Aledo, A. Romero, R. M. Rubio,  \textit{Constant mean curvature spacelike hypersurfaces in Lorentzian warped products and Calabi-Bernstein type problems}, Nonlinear Anal. 106 (2014), 57-69.     
 \bibitem{alm} F. J. Almgren,  \emph{Some interior regularity theorems for minimal surfaces
and an extension of Bernstein's theorem}, Ann. of Math. 84 (1966), 277-292.

 \bibitem{bomgiogiu} E. Bombieri, E. De Giorgi, E. Giusti, \emph{Minimal cones and the Bernstein
theorem}, Inventiones Math. 7 (1969), 243-269.
\bibitem {ca} E. Calabi, \textit{Examples of Bernstein problems for some nonlinear equations}, in Global Analysis (Proc. Sympos. Pure Math., Vol. XV, Berkeley, Calif., 1968), Amer. Math. Soc., Providence, R.I., 1970 pp. 223-230.
\bibitem {calisa} M. P. Cavalcante, H. F. de Lima, M. S. Santos,  \textit{New Calabi-Bernstein type results in weighted generalized Robertson-Walker spacetimes}, Acta Math. Hungar. 145 (2015), no. 2, 440-454.
\bibitem {chengyau} S. Y. Cheng, S. T. Yau, \textit{Differential equations on Riemannian manifolds and their geometric applications}, Comm. Pure Appl. Math. 28 (1975), no. 3, 333-354.
 \bibitem{gio} E. de Giorgi, \emph{Una extensione del teorema di Bernstein}, Ann. Scuola
Norm. Sup. Pisa 19 (1965), 79-85.
 \bibitem{hala} R. Harvey and H. B. Lawson, Jr., \textit{Calibrated geometries}, Acta Math. 148 (1982) 47-157.
\bibitem {hi}   D. T. Hieu, {\sl Some calibrated surfaces in manifolds with density}, J. Geom. Phys. 61 (2011), no. 8, 1625-1629.
\bibitem {hina} D. T. Hieu, T. L. Nam, \textit{Bernstein type theorem for entire weighted minimal graphs in $\Bbb{G}\sp n\times\Bbb{R}$}, J. Geom. Phys. 81 (2014), 87-91.
\bibitem {lo} R. L\'opez, \textit{Differential geometry of curves and surfaces in Lorentz-Minkowski space}, Int. Electron. J. Geom. 7 (2014), no. 1, 44-107.
\bibitem {me} J. Mealy, \textit{Calibrations on semi-Riemannian manifolds}, Thesis (Ph.D.)-Rice University. ProQuest LLC, Ann Arbor, MI, 1989. 72 pp.
\bibitem{mo1} F. Morgan, \textit{Manifolds with density}, Notices Amer. Math. Soc. \textbf{52} (2005), 853-858.
\bibitem{mo2} F. Morgan, \textit{Geometric Measure Theory: a Beginner's Guide}, 4th ed. Academic Press, London, 2009.
\bibitem {nei} B. O'Neil, \textit{Semi-Riemannian Geometry with Applications to Relativity}, Academic Press, London (1983).
\bibitem{ro} A. Romero, \textit{Maximal graphs and Calabi-Bernstein's type problems in some Robertson-Walker spacetimes}, Bull. Transilv. Univ. Brasov Ser. III 1(50) (2008), 309-316.
\bibitem{si} J. Simons, \emph{Minimal varieties in Riemannian manifolds}, Ann. of Math.
88 (1968), 62-105.
\bibitem {wan}    L.  Wang, \emph{A Bernstein type theorem for self-similar shrinkers}, Geom. Dedicata 151 (2011), 297-303.
\end{thebibliography}
\end{document}